\newtheorem*{theorem*}{Theorem}
\newtheorem{proposition}{Proposition}
\newtheorem*{proposition*}{Proposition}
\newtheorem{lemma}{Lemma}  
\newtheorem{definition}{Definition}
\newtheorem*{conjecture*}{Conjecture}
\newtheorem*{problem*}{Problem}
\newtheorem*{example*}{Example}
\def\cal{\mathcal}
\def\R{\mathbb{R}}
\def\g{\gamma}
\def\r{\rho}
\def\cV{{\mathcal V}}
\def\hB{\hat B}
\def\hG{\hat G}
\def\hD{\hat D}
\def\hR{\hat R}
\def\Dstar{{\mathcal D}^*}
\def\smallskip{\par\vspace{1mm}}
\def\medskip{\par\vspace{2mm}}
\def\bigskip{\par\vspace{3mm}}
\def\fr#1#2{\frac{#1}{#2}}
\def\smfr#1#2{\tfrac{#1}{#2}}
\def\m#1{\begin{bmatrix}#1\end{bmatrix}}
\def\thenumber{0}
\def\eq#1{\global\advance\equationcount by 1
   \def\thenumber{\number\equationcount}
                        {$$#1\eqno(\thenumber)$$}}
\begin{document}

\title[Partially rigid motions]{Partially rigid motions in the $n$-body problem}
\author{Richard Moeckel}
\address{School of Mathematics\\ University of Minnesota\\ Minneapolis MN 55455}

\email{rick@math.umn.edu}

\keywords{Celestial mechanics, n-body problem, infinite spin}

\subjclass[2010]{ 37N05, 70F10, 70F15, 70F16, 70G60}

\begin{abstract}
A solution of the $n$-body problem in $\R^d$ is a {\em relative equilibrium} if all of the mutual distance between the bodies are constant.  In other words, the bodies undergo a rigid motion.  Here we investigate the possibility of partially rigid motions, where some but not all of the distances are constant.  In particular, a {\em hinged} solution is one such that exactly one mutual distance varies.  The goal of this paper is to  show that hinged solutions don't exist when $n=3$ or $n=4$. For $n=3$ this means that if 2 of the 3 distances are constant so is the third and for $n=4$, if 5 of the 6 distances are constant, so is the sixth.  These results hold independent of the dimension $d$ of the ambient space.
\end{abstract}

\date{June 21, 2024}
\maketitle

\section{Introduction}
Lagrange  discovered the {\em relative equilibrium} solutions of the three-body problem in $\R^3$, characterized by the  property that  all three mutual distances remain constant.   Up to rotation and scaling there are exactly four possible shapes for a relative equilibrium configuration -- the equilateral triangle configuration and three collinear configurations, one for each choice of which mass is between the other two.  

In this paper, we consider solutions where only two of the mutual distances are constant.
\begin{definition}
A solution of the three-body problem will be called {\em hinged} if exactly two of the three mutual distances remain constant.  More generally, a solution of the $n$-body problem is hinged if all but one of the mutual distances are constant and the last one is not.
\end{definition}
  
The main result here is that hinged solutions  do not exist for $n=3,4$.
\begin{theorem*}\label{th_unhinged}
The three- and four-body problems in $\R^d$ are unhinged.  In other words, for $n=3$, the only solutions such that two of the mutual distances remain constant are the relative equilibrium solutions.  For $n=4$, the only solutions such that five of the six distance are constant are the relative equilibria.
\end{theorem*}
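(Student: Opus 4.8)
The plan is to pass to translation-reduced coordinates and exploit a single structural fact: once all but one distance are held constant, differentiating those constraints twice expresses essentially every inner product of the velocities as an explicit function of the one remaining degree of freedom, and a third differentiation then over-determines the motion.

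For $n=3$, put $u=q_2-q_1$ and $v=q_3-q_1$, so that the two constant distances are $a=\norm u$, $b=\norm v$ and the varying one is $r=\norm{u-v}$, with $w=\langle u,v\rangle=\tfrac12(a^2+b^2-r^2)$. Newton's equations express $\ddot u$ and $\ddot v$ as combinations of $u,v$ in which only $r$ (hence $w$) varies. Differentiating $\norm u^2\equiv a^2$ and $\norm v^2\equiv b^2$ gives $\langle u,\dot u\rangle=\langle v,\dot v\rangle=0$ at first order and, at second order, $\norm{\dot u}^2=-\langle u,\ddot u\rangle$ and $\norm{\dot v}^2=-\langle v,\ddot v\rangle$; substituting the equations of motion shows these are explicit functions $\norm{\dot u}^2=A(w)$, $\norm{\dot v}^2=B(w)$. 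I would then differentiate these two identities in $t$: on the left $\tfrac{d}{dt}\norm{\dot u}^2=2\langle\dot u,\ddot u\rangle$ collapses, using $\langle u,\dot u\rangle=0$, to a multiple of $p=\langle\dot u,v\rangle$, while on the right it equals $A'(w)\dot w=A'(w)(p+q)$ with $q=\langle u,\dot v\rangle$. The two resulting relations are linear and homogeneous in $(p,q)$ — a system $N(w)(p,q)^{T}=0$ — and the masses divide out of each row.

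Suppose now that $r$ were non-constant on some interval. There $\dot w=p+q\neq0$, so $(p,q)\neq(0,0)$, forcing $\operatorname{det}N(w)=0$ throughout. A short computation reduces this, for $r\notin\{a,b\}$, to the purely geometric relation
\[
\frac{3(w-a^2)\,b^{3}}{b^{3}-r^{3}}+\frac{3(w-b^2)\,a^{3}}{a^{3}-r^{3}}+4r^{2}=0 ,
\]
with $w=\tfrac12(a^2+b^2-r^2)$. Clearing denominators turns the left side into a polynomial in $r$ of degree $8$ whose leading coefficient is $4$, independent of the masses and of $a,b$; it is therefore not identically zero, so the relation holds at only finitely many $r$. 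But a non-constant $w$ sweeps an interval of values, a contradiction. Hence $\dot w\equiv0$, all three distances are constant, and the solution is a relative equilibrium.

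For $n=4$ I would run the identical argument with the varying distance taken to be $r_{34}$, relative vectors $u_i=q_i-q_1$, and the lone varying Gram entry $w=\langle u_3,u_4\rangle$. The five constancy conditions force $\norm{\dot u_2}^2$ to be constant and give $\norm{\dot u_3}^2,\norm{\dot u_4}^2,\langle\dot u_2,\dot u_3\rangle,\langle\dot u_2,\dot u_4\rangle$ as explicit functions of $w$, while $\ddot u_2,\ddot u_3,\ddot u_4$ all lie in $\operatorname{span}\{u_2,u_3,u_4\}$. Differentiating these five determined quantities in $t$ yields five relations that are linear and homogeneous in the four cross-terms $p_{23},p_{24},p_{34},p_{43}$, where $p_{ij}=\langle u_i,\dot u_j\rangle$; importantly the velocity inner product $\langle\dot u_3,\dot u_4\rangle$ does not appear, and $\dot w=p_{34}+p_{43}$. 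This is an over-determined homogeneous system $M(w)\vec p=0$ of size $5\times4$, and a non-constant $r_{34}$ would force $\operatorname{rank}M(w)\le3$, i.e.\ every $4\times4$ minor of $M(w)$ to vanish identically in $w$. The hard part will be exactly this non-degeneracy check: showing that for every admissible choice of masses and of the five constant distances at least one $4\times4$ minor of $M(w)$ is a nonzero rational function of $r_{34}$. Here the masses do not visibly cancel and the entries are more elaborate, so I would extract a nonzero leading coefficient from a well-chosen minor by an asymptotic analysis as $r_{34}\to\infty$ or $r_{34}\to0$, in the spirit of the degree-$8$ computation above — most likely with computer-algebra assistance — after which the interval argument applies verbatim to give $\dot w\equiv0$.
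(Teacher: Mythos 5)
Your $n=3$ argument is correct and is, after a change of variables, essentially the paper's own: your unknowns $(p,q)$ span the same two-dimensional space as the reduced variables $(g_{23},\rho_{12})$ of the Lagrange--Albouy--Chenciner formulation, your two differentiated identities are exactly the conditions $\ddot g_{12}=\ddot g_{13}=0$, and I checked that your vanishing-determinant relation agrees with the determinant of the paper's $2\times 2$ system after dividing out the mass factors $m_2,m_3$ and the nonzero quantities $r^{-3}-a^{-3}$, $r^{-3}-b^{-3}$. The degree-$8$ polynomial with leading coefficient $4$ does settle that case.

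The gap is in the $n=4$ step that you yourself flag as ``the hard part,'' and it is not merely hard --- as stated it fails. Your $5\times4$ system $M(w)\vec p=0$ is row- and column-equivalent to the paper's system in $(\rho_{12},\rho_{13},\rho_{23},g_{34})$: your five differentiated velocity identities are invertible linear combinations of the five conditions $\ddot g_{ij}=0$, and your four cross-terms $p_{23},p_{24},p_{34},p_{43}$ are an invertible change of basis of the paper's four unknowns, so the two matrices have the same rank for all parameter values. The non-degeneracy you propose to verify is false: there exist admissible masses and constant distances --- for example all five constant distances equal --- for which \emph{all five} $4\times4$ minors vanish identically in $r_{34}$, so no asymptotic extraction of a leading coefficient can succeed. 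The condition $\operatorname{rank}M\le 3$ is only necessary for a nontrivial kernel; when it holds identically one must instead impose the normalization $g_{34}=1$ (equivalently $p_{34}+p_{43}\ne 0$) and test consistency of the resulting inhomogeneous system. In the all-equal case that still pins $x$ down ($x^3=4$), and in general the paper must solve two of the equations for $\rho_{13},\rho_{23}$ (after disposing of $x=k_{13}$ and $x=k_{23}$), reduce to three compatibility conditions $a_i\rho_{12}=b_i$, and then show by a Groebner-basis computation that the $18$ coefficient polynomials of the cross-determinants $a_jb_k-a_kb_j$ cannot all vanish unless every $k_{ij}=1$. That case analysis, not a single nonvanishing minor, is the real content of the four-body proof, and your outline contains no substitute for it.
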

Note that for $n=3$, the theorem is false if two of the masses are zero.  If $m_1=m_2=0$ we can put them on circular orbits around $m_3$ to produce hinged solutions.  In fact this is the standard model of a clockwork solar system with two planets.  The theorem shows that such circular motions are impossible if the masses are all positive.

\begin{figure}[h]
\scalebox{0.6}{\includegraphics{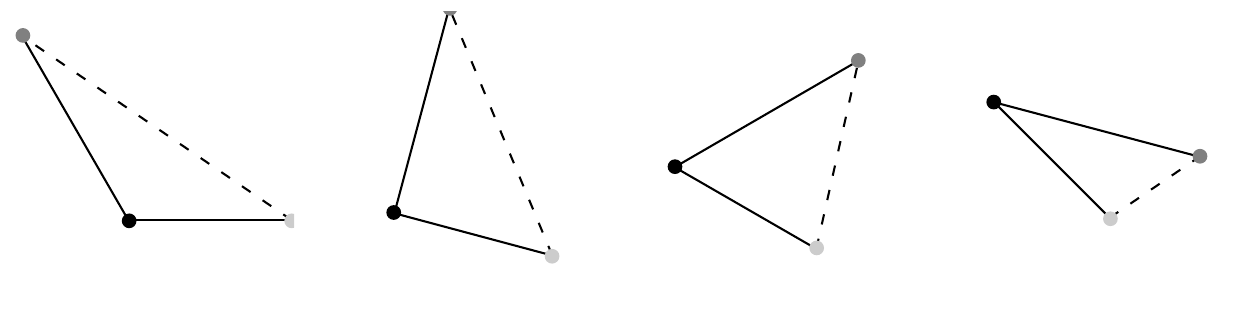}}
\caption{Hinge motion of three bodies in a plane.  Two distances are constant and the third changes.}
\label{fig_hinge3}
\end{figure}

Figures~\ref{fig_hinge3} and \ref{fig_hinge4} indicate some hinged motions of three bodies in a plane and four bodies in space and motivates the terminology.  But these motions can't be solutions of the $n$-body problem in any $\R^d$.  For four bodies in the plane, it's clear that fixing five distance  produces a rigid structure on purely geometrical grounds.  For $n=5$, fixing nine of the 10 distances imposes a lot of rigidity and it would be necessary to go to $\R^4$ to construct a hinge-like motion.    However, it's natural to conjecture that the theorem is still true for $n>4$, that is, that the $n$-body problem is unhinged.

\begin{figure}[h]
\scalebox{0.6}{\includegraphics{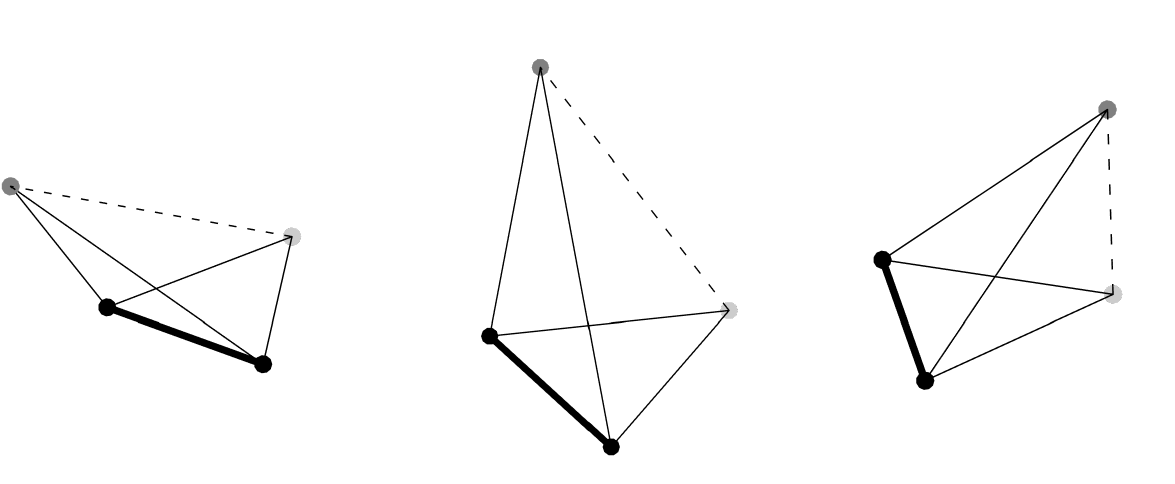}}
\caption{Hinge motion of four bodies in space.  Five distances are constant and the sixth changes.  It can be viewed as two rigid triangles meeting along a hinge axis (bold line) containing two of the bodies.}
\label{fig_hinge4}
\end{figure}

A more interesting conjecture is that for the three-body problem it suffices to fix only one distance to obtain the conclusion
\begin{definition}
A solution of the $n$-body problem will be called {\em  partially rigid} if at least one, but not all, of the mutual distances remains constant.
\end{definition}
\begin{conjecture*}
There are no partially rigid solutions of the three-body problem in $\R^d$.  In other words, the only solutions such that one of the mutual distances remains constant are the relative equilibrium solutions.
\end{conjecture*}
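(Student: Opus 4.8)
The plan is to bootstrap the already-established unhinged Theorem. Since a relative equilibrium has \emph{all} three distances constant, it is not partially rigid, so the conjecture asserts exactly that one constant distance forces all three to be constant. The hinged case (exactly two constant) is already excluded by the $n=3$ part of the Theorem, so the genuinely new content is the case in which \emph{exactly one} distance is constant. Thus it suffices to prove the implication: if one mutual distance is constant on an interval, then a \emph{second} one is constant too; the Theorem then upgrades this to a relative equilibrium. After normalizing the center of mass I would assume $r_{12}(t)\equiv\rho$ and aim to show that $r_{13}$ or $r_{23}$ is also constant.

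First I would cut the ambient dimension down. Writing the motion in Jacobi vectors $\xi,\eta\in\R^d$, the Newtonian accelerations $\ddot\xi,\ddot\eta$ always lie in $\mathrm{span}\{\xi,\eta\}$ because each gravitational force points along a relative position vector. Projecting onto the orthogonal complement of $V=\mathrm{span}\{\xi(0),\eta(0),\dot\xi(0),\dot\eta(0)\}$ gives a linear homogeneous second-order system with vanishing initial data, so the whole solution stays in the fixed subspace $V$ of dimension at most $4$. Hence I may assume $d\le 4$, which is what makes the assertion ``independent of $d$'' tractable. Since solutions are real-analytic away from collisions, the hypothesis $r_{12}\equiv\rho$ is equivalent to the vanishing of \emph{every} time derivative of $u_{12}=r_{12}^2$, an infinite family of polynomial identities on the reduced phase space.

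Next I would feed these identities into the squared-distance equations of motion. A direct computation gives $\ddot u_{12}=2v_{12}+F_{12}$, where $v_{12}=|\dot q_1-\dot q_2|^2$ and $F_{12}=-2(m_1+m_2)/r_{12}+m_3\big((r_{23}^2-r_{13}^2-r_{12}^2)/r_{13}^3+(r_{13}^2-r_{23}^2-r_{12}^2)/r_{23}^3\big)$. The constraints $\dot u_{12}=\ddot u_{12}=0$ then force $\langle q_1-q_2,\dot q_1-\dot q_2\rangle=0$ and pin $v_{12}$ to a definite function of $r_{13},r_{23}$. Differentiating once more yields $\dot v_{12}=2m_3\big(\langle\dot q_1-\dot q_2,q_3-q_1\rangle/r_{13}^3+\langle\dot q_1-\dot q_2,q_2-q_3\rangle/r_{23}^3\big)$, and iterating builds the full hierarchy. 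The goal is to show that this hierarchy, together with conservation of energy and of the angular-momentum bivector $\Omega=\sum m_i q_i\wedge\dot q_i\in\Lambda^2\R^d$, forces $\dot u_{13}=\dot u_{23}=0$. In the planar case $d=2$ the angular momentum is a scalar, the rotational freedom collapses, and the frozen-$r_{12}$ condition becomes a force-balance equation for an effective two-degree-of-freedom system in $(r_{13},r_{23})$; there one can hope to prove, by an elimination or a monotonicity argument on the effective potential, that no trajectory sustains the extra constraint except at equilibrium.

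The main obstacle is that the distance dynamics do not close: every differentiation of the constraint introduces genuinely new rotational data, the cross terms $\langle\dot q_1-\dot q_2,q_2-q_3\rangle$ and $\langle\dot q_1-\dot q_2,q_3-q_1\rangle$, which record how the instantaneous plane of the triangle is turning inside $V$. For $d=3,4$ the angular momentum becomes a bivector with up to two independent rotation planes, supplying extra freedom that must be shown not to permit a balanced yet genuinely deforming motion. The most robust route is to observe that the constraint set $Z=\bigcap_k\{\,d^k u_{12}/dt^k=0\,\}$ is flow-invariant and, by Noetherianity, cut out by finitely many of the $P_k$, and then to carry out an algebraic elimination showing that on $Z$ one of $\dot u_{13},\dot u_{23}$ must vanish. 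Making this elimination go through uniformly over all positive mass ratios, while controlling the rotational coupling in $\R^4$, is where the real difficulty lies, and is presumably why the statement remains a conjecture.
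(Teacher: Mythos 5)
The statement you are attempting is the paper's \emph{Conjecture}, not its Theorem: the paper offers no proof of it and explicitly leaves it open, noting only that it fails when a mass is allowed to vanish (the circular restricted problem). So there is no proof in the paper to compare against; the question is whether your outline closes the problem on its own, and it does not. Your preliminary reductions are sound and genuinely useful: the observation that, by the unhinged Theorem for $n=3$, it suffices to show that one constant distance forces a \emph{second} distance to be constant; the confinement of the motion to the at most $4$-dimensional span of the initial Jacobi positions and velocities, which legitimately reduces to $d\le 4$; and the use of real-analyticity to convert $r_{12}\equiv\rho$ into an infinite hierarchy of identities. But the core of a proof is exactly the step you defer: showing that this hierarchy, together with the conservation laws, forces $\dot u_{13}=0$ or $\dot u_{23}=0$. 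You state this as ``the goal'' and close by conceding that the elimination ``is where the real difficulty lies, and is presumably why the statement remains a conjecture.'' That is an accurate self-assessment, and it means the proposal is a research plan, not a proof.

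The concrete obstruction is visible already in the paper's own machinery. In the hinged case the paper obtains two linear conditions $\ddot g_{12}=\ddot g_{13}=0$ in the two unknowns $(\r_{12},g_{23})$ (the $d_{ij}$ conveniently drop out of the second-derivative equations), and the $2\times 2$ determinant produces a nontrivial polynomial constraint on the single free distance, whose coefficients are constants. With only $r_{12}$ constant you get one condition $\ddot g_{12}=0$ in the three unknowns $(\r_{12},g_{13},g_{23})$, which is underdetermined; differentiating further does not close the system, because each new derivative injects genuinely new phase variables (first $d_{13},d_{23}$, then their derivatives) --- precisely the ``rotational data'' you point to --- and moreover the coefficients now depend on the two varying distances $r_{13},r_{23}$, so even a derived polynomial identity would only relate them to each other rather than freeze either one. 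Noetherianity of the constraint ideal tells you the invariant set is cut out by finitely many conditions, but it does not perform the elimination. Finally, any correct proof must use the positivity of all three masses in an essential way, since the statement is false when $m_3=0$; your outline never identifies where that hypothesis would enter, which is a further sign that the decisive idea is still missing.
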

The conjecture is false if even one of the masses is zero.  In fact, for the well-known circular restricted  three-body problem with $m_3=0$, all of the solutions have $r_{12}(t)=1$.   However, it's possible to prove  that the planar circular restricted three-body problem is unhinged.  In other words, the only solutions such that two of the mutual distances remain constant are the relative equilibrium solutions.  But the proof will not be given here.

The motivation for these ideas came from considering a simple model of tidal friction.  Imagine that two of the three bodies are connected by a massless spring with friction. When the length of the spring is changing, energy is dissipated and it is natural to conjecture that the system will converge to some kind of equilibrium state where the length of the spring remains constant.   Is this state necessarily a relative equilibrium or could there still be relative motions among the bodies~?  The conjecture about lack of coupled solutions is just the analogous problem without the spring. 

 Clearly there are many variations on this theme of partially rigid motions.  A general formulation would be as follows.
 \begin{problem*}
 For solutions of the $n$-body problem in $\R^d$, what is the smallest number mutual distances which must be assumed fixed in order to guarantee  that  the motion is a relative equilibrium ?  Call it the {\em celestial rigidity number} $\r(n,d)$. 
 \end{problem*}
 If the dimension $d$ is small there will be some geometrical rigidity giving an upper bound for $\r(n,d)$.  But it may be that we always have $\r(n,d)=1$.

\section{The Lagrange-Albouy-Chenciner equations}
Lagrange was able to eliminate the translational and rotational symmetry of the $3$-body problem by deriving differential equations for the mutual distances \cite{Lag}.  More recently Albouy and Chenciner extended this to the $n$-body problem in $\R^d$ \cite{AlbChen, Alb, Chen, Moe}.  In this section we present a version of their method and derive equations satisfied by hypothetical hinged solutions.

Consider the $n$-body problem in $\R^d$ and let $X_i, V_i\in \R^d$ and $m_i>0$ denote the positions, velocities and masses of the $i$-th body for $i=1,\ldots,n$.   Let $X, V$ be the $d\times n$ matrices whose columns are the vectors $X_i, V_i$.   To eliminate the rotational symmetry,construct the  $n\times n$ Gram matrices 
$$B=X^TX\qquad C= X^TV\qquad D= V^TV.$$
Since the entries are Euclidean inner products, these are invariant under rotations of $\R^d$.  The matrices $B$ and $D$ are symmetric and it is convenient to split the matrix $C$ into symmetric and antisymmetric parts
$$C= G+R\qquad G=\fr12(C+C^T)\quad R = \fr12(C-C^T).$$
We can regard $B,G,D,R$ as representing bilinear forms on $\R^n$.

Newton's law of motion for the $n$-body problem implies the matrix differential equations
$$\dot X= V\qquad \dot V= XA(X)$$
where 
\begin{equation}\label{eq_A}
A(X) = 
\m{A_{11}& \fr{m_1}{r_{12}^3}&\cdots& \fr{m_1}{r_{1n}^3}\\
 \fr{m_2}{r_{12}^3}&A_{22}&\cdots& \fr{m_2}{r_{2n}^3}\\
  \vdots& & &\vdots \\
  \fr{m_n}{r_{1n}^3}& \fr{m_n}{r_{2n}^3}&\cdots&A_{nn}
  }
  \qquad A_{jj} = - \sum_{i\ne j}A_{ij} = -\sum_{i\ne j} \fr{m_i}{r_{ij}^3}
\end{equation}
and $r_{ij} = |X_i-X_j|$  are the mutual distances between the bodies.
Then it's easy to see that the Gram matrices satisfy
\begin{equation}\label{eq_matrixODE}
\begin{aligned}
\dot B&=2G\\
\dot G&=D+\fr12(BA+A^TB)\\
\dot D&= (GA+A^TG)+(A^TR-RA)\\
\dot R&= \fr12(BA-A^TB).
\end{aligned}
\end{equation}

To eliminate the translation symmetry, note that if we translate all of the positions by some vector $u\in\R^d$ and all velocities by $v\in\R^d$ then $X\mapsto X+uL^T$ and $V\mapsto V+vL^T$  where $L\in\R^n$ is the $n\times 1$ column  vector whose entries are all $1$'s.  It follows that the restriction of our bilinear forms to the $(n-1)$-dimensional hyperplane
$$\Dstar = L^\perp = \{u\in\R^n: u_1+\ldots+u_n=0\}$$
will be invariant under translations as well as rotations.  The notation is from \cite{AlbChen} where $\Dstar$ is the dual vector space of the quotient space $\cal D =\R^n/L$.   In this way, specifying a point of the reduced phase space of the $n$-body problem, amounts to giving four bilinear forms on an $(n-1)$-dimensional vector space.
We will show that the differential equations (\ref{eq_matrixODE}) induce a well-defined dynamical system on this space.

Begin by replacing the Gram matrices in (\ref{eq_matrixODE}) by equivalent ones.  We will call two $n\times n$  matrices {\em equivalent} if  they restrict to the same bilinear form on $\Dstar$. 
\begin{lemma}\label{lemma_reps}  
Every symmetric $n\times n$ matrix is equivalent a unique symmetric matrix with all of its diagonal elements equal to $0$.  Every antisymmetric $n\times n$ matrix is equivalent a unique antisymmetric matrix with all the entries in the last row and column equal to $0$.  
\end{lemma}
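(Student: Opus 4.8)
The plan is to reinterpret the equivalence relation through a ``null space'' and then reduce both claims to one short computation. Write $N$ for the difference of two $n\times n$ matrices; the two matrices are equivalent precisely when $u^T N w=0$ for all $u,w\in\Dstar$. Since $\Dstar=L^\perp$ is spanned by the difference vectors $e_i-e_j$, this condition is equivalent, by bilinearity, to the four-index identity
\[
N_{ac}-N_{ad}-N_{bc}+N_{bd}=0\qquad\text{for all }a,b,c,d,
\]
obtained by expanding $(e_a-e_b)^T N(e_c-e_d)=0$. Call a matrix satisfying this condition \emph{null}. Each of the two statements then amounts to showing that the restriction map, from the chosen normalized family of matrices to bilinear forms on $\Dstar$, is a bijection: \emph{existence} of a representative says the map is onto, and \emph{uniqueness} says its kernel — the normalized null matrices — is trivial.

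For existence I would simply exhibit the representative. In the symmetric case, given a symmetric $M$, set $v_i=\tfrac12 M_{ii}$ and subtract the correction $vL^T+Lv^T$; since $u^T(vL^T+Lv^T)w=(u^Tv)(L^Tw)+(u^TL)(v^Tw)=0$ for $u,w\in\Dstar$, this correction is null, so $M'=M-(vL^T+Lv^T)$ is equivalent to $M$, and by construction $M'_{ii}=M_{ii}-2v_i=0$. In the antisymmetric case, given antisymmetric $A$, subtract $fL^T-Lf^T$ with $f_i=A_{in}$; this is again null, and one checks directly that the resulting $R=A-(fL^T-Lf^T)$ has vanishing last row, hence, by antisymmetry, vanishing last column as well.

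The real content is uniqueness, i.e.\ that a normalized null matrix must vanish, and this is where I expect the work to lie — though it reduces to a direct manipulation of the four-index identity. The trick is to set $a=c$, which, after applying the normalization, produces a recursion expressing a general entry in terms of a few reference entries that the normalization then kills. Concretely, in the symmetric case ($N$ symmetric, $N_{ii}=0$, null), putting $a=c$ yields $N_{bd}=N_{ad}+N_{ab}$; fixing $a=1$ and then taking $b=d$ forces $2N_{1b}=N_{bb}=0$, whence $N_{1b}=0$ and therefore $N_{bd}=0$ for all $b,d$. In the antisymmetric case ($R$ antisymmetric, last row and column zero, null), putting $a=c$ gives $R_{bd}=R_{ad}-R_{ab}$; now simply fixing $a=n$ makes the right-hand side $R_{nd}-R_{nb}=0$, so $R\equiv 0$. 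As a cross-check, the dimensions match on both sides: zero-diagonal symmetric matrices and symmetric forms on the $(n-1)$-dimensional space $\Dstar$ both have dimension $\binom{n}{2}$, while last-row/column-zero antisymmetric matrices and antisymmetric forms on $\Dstar$ both have dimension $\binom{n-1}{2}$; so injectivity together with equal dimensions already forces bijectivity, giving an alternative route to existence.
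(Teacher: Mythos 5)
Your proof is correct and follows essentially the same route as the paper: existence comes from subtracting the same rank-one corrections along $L$ (your $vL^T+Lv^T$ and $fL^T-Lf^T$ reproduce the paper's $\hat B$ and $\hat R_{ij}=R_{ij}-R_{in}-R_{nj}$ exactly), and your uniqueness argument via the four-index identity with $a=c$ is just the paper's evaluation $e_{ij}^T\hat B e_{ij}=-2\hat B_{ij}$ and $e_{in}^T\hat R e_{jn}=\hat R_{ij}$ in different packaging. The dimension-count cross-check likewise mirrors the remark the paper makes immediately after the lemma.
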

\begin{proof}
Let $B$ be any symmetric $n\times n$ matrix.  If we subtract $\fr12 b_{ii}L^T$ from the $i$-th row and $\fr12 b_{jj}L$ from the $j$-th column we obtain a symmetric matrix $\hat B$ equivalent to $B$ with diagonal elements ${\hat B}_{ii}= 0$.  Let $e_i$ denote the standard basis vectors of $\R^n$ and let $e_{ij}=e_i-e_j$.  These vectors are in $\Dstar$ and we have
$$e_{ij}^T{\hat B}e_{ij} = {\hat B}_{ii}+{\hat B}_{jj}-2{\hat B}_{ij} = -2{\hat B}_{ij}.$$
This shows that the entries of the matrix are uniquely determine by the values of the bilinear form on $\Dstar$.

Let $R$ be any antisymmetric $n\times n$ matrix and let $\hat R$ be the matrix with entries 
$${\hat R}_{ij} = R_{ij} -R_{in}-R_{nj}= e_{in}^TRe_{jn}.$$
Then $\hat R$ is antisymmetric and has vanishing last row and column.  It's equivalent to $R$ since it agrees with $R$ on the vectors $e_{in}$, $i=1,\ldots,n-1$  which form a basis  for $\Dstar$.
\end{proof}
Note that the dimension of the space of symmetric bilinear forms on an $(n-1)$-dimensional space like $\Dstar$ is $\binom{n}{2}$
the same as the number of independent nonzero entries in the equivalent matrix $\hat B$ of the lemma.  Similarly, the space of antisymmetric forms on $\Dstar$ has dimension $\binom{n-1}{2}$, the same as the number of independent nonzero entries of $\hat R$.

\begin{example*}
If $B=X^TX$ is the Gram matrix formed from the position vectors then 
$$e_{ij}^TBe_{ij} = |Xe_{ij}|^2 =|X_i-X_j|^2 = r_{ij}^2$$
so ${\hat B}_{ij}=-\fr12 r_{ij}^2$.  This makes clear how the equivalence class of $B$ represents the configuration of the bodies up to symmetry.  
\end{example*}
Note that the matrix $A(X)$  in (\ref{eq_A}) depends only on the distances $r_{ij}$. Since $r_{ij}^2=e_{ij}^TBe_{ij} = B_{ii}+B_{jj}-2B_{ij}$, we can view $A(X)$ as a function $A(B)$, at least if $B$ is positive definite on $\Dstar$.  Note for later use that, since $e_{ij}\in\Dstar$, $A(B)$ depends only on the equivalence class of $B$.   Taking this point of view, we can forget about $X,V$ and think of (\ref{eq_matrixODE}) as a dynamical system on the space 
$\cV=\{(B,G,D,R)\}$ of quadruples of $n\times n$ matrices with $B,G,D$ symmetric and $R$ antisymmetric and $B$ positive definite on $\Dstar$.   

The real numbers $b_{ij} = e_{ij}^T B e_{ij}, g_{ij} =e_{ij}^T G e_{ij}, d_{ij} = e_{ij}^T D e_{ij}$, $1\le i<j \le n$ and $\r_{ij} = e_{in}^TR e_{jn}$, $1\le i<j<n$ depend only on the equivalence classes of $B,G,D,R$.   We will call them  the {\em standard coordinates} of the  classes.  Note that the components the matrices $\hB, \hG, \hD, \hR$ of Lemma~\ref{lemma_reps} depend only on these standard coordinates.   Indeed, we have $\hB_{ij} = -\fr12 b_{ij}$ and similarly for the symmetric matrices $\hG, \hD$ while $\hR_{ij} = \r_{ij}$.  Using (\ref{eq_matrixODE}), we will derive differential equations for the standard coordinates.  These will describe the dynamics induced by (\ref{eq_matrixODE}) on equivalence classes and will also be useful for the computations later in the paper.

\begin{proposition}\label{eq_reducedODE}
The standard coordinates satisfy
\begin{equation}\label{eq_coordinateODE}
\begin{aligned}
\dot b_{ij}&=2g_{ij}\\
\dot g_{ij}&=d_{ij}+\fr12e_{ij}^T(\hB A+A^T\hB)e_{ij}\\
\dot d_{ij}&= e_{ij}^T(\hG A+A^T\hG)e_{ij}+e_{ij}^T(A^T\hR-\hR A)e_{ij}\\
\dot \r_{ij}&= \fr12e_{in}^T(\hB A-A^T\hB)e_{jn}.
\end{aligned}
\end{equation}
\end{proposition}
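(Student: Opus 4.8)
The plan is to differentiate the defining formulas for the standard coordinates directly, substitute the matrix equations (\ref{eq_matrixODE}), and then replace each of the full Gram matrices $B,G,D,R$ by its hatted normal form from Lemma~\ref{lemma_reps}. Since the vectors $e_{ij}$ and $e_{in}$ are constant, the product rule gives $\dot b_{ij}=e_{ij}^T\dot B\,e_{ij}=2\,e_{ij}^TGe_{ij}=2g_{ij}$ at once, and similarly $\dot g_{ij}=e_{ij}^T\dot Ge_{ij}$, $\dot d_{ij}=e_{ij}^T\dot De_{ij}$, $\dot\r_{ij}=e_{in}^T\dot Re_{jn}$. Feeding in (\ref{eq_matrixODE}) produces the desired right sides, but written in terms of $B,G,D,R$ rather than the hatted matrices. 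The whole content of the proposition is therefore the claim that one may pass from $B,G,D,R$ to $\hB,\hG,\hD,\hR$ inside these expressions.

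The key structural fact that makes this replacement legal is that $A$ maps all of $\R^n$ into $\Dstar$. Indeed, the defining relation $A_{jj}=-\sum_{i\ne j}A_{ij}$ in (\ref{eq_A}) says precisely that every column of $A$ sums to zero, i.e. $L^TA=0$; hence $Av\in L^\perp=\Dstar$ for every $v\in\R^n$. In particular $Ae_{ij}$, $Ae_{in}$ and $Ae_{jn}$ all lie in $\Dstar$, as do $e_{ij},e_{in},e_{jn}$ themselves. By Lemma~\ref{lemma_reps} the matrices $B$ and $\hB$ define the same bilinear form on $\Dstar$ (this is the meaning of equivalence), and likewise for the pairs $G,\hG$, $D,\hD$, $R,\hR$. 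Every term appearing after differentiation is then a bilinear pairing with both slots occupied by vectors of $\Dstar$: for instance $e_{ij}^TBAe_{ij}=e_{ij}^TB\,(Ae_{ij})$ pairs $e_{ij}\in\Dstar$ with $Ae_{ij}\in\Dstar$, while $e_{ij}^TA^TBe_{ij}=(Ae_{ij})^TBe_{ij}$ does the same. Consequently $B$ may be replaced by $\hB$ in each such pairing, and identically $G\to\hG$, $D\to\hD$, $R\to\hR$.

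Carrying this out term by term yields the four displayed equations: $\dot g_{ij}=d_{ij}+\fr12e_{ij}^T(\hB A+A^T\hB)e_{ij}$, where the $D$ term contributes simply $d_{ij}=e_{ij}^TDe_{ij}$; $\dot d_{ij}=e_{ij}^T(\hG A+A^T\hG)e_{ij}+e_{ij}^T(A^T\hR-\hR A)e_{ij}$; and $\dot\r_{ij}=\fr12e_{in}^T(\hB A-A^T\hB)e_{jn}$, where again $e_{in}$ is paired with $Ae_{jn}\in\Dstar$ and $Ae_{in}\in\Dstar$ with $e_{jn}$. I expect no real obstacle beyond the bookkeeping of $\Dstar$-membership: the only non-formal step is the observation $L^TA=0$, after which every substitution is forced. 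The single point to watch is to record each product as an honest pairing of two $\Dstar$ vectors before invoking equivalence, since the replacement $B\to\hB$ holds only on $\Dstar\times\Dstar$ and not as an identity of matrices.
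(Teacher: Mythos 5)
Your proposal is correct and follows essentially the same route as the paper: bracket the matrix equations (\ref{eq_matrixODE}) with the constant vectors $e_{ij}$ (resp.\ $e_{in},e_{jn}$), observe that $L^TA=0$ forces the image of $A$ into $\Dstar$, and then replace $B,G,D,R$ by their hatted equivalents since every term is a pairing of two $\Dstar$ vectors. No gaps.
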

Since $A$ depends only on the distances $r_{ij}$ and $b_{ij}=r_{ij}^2$, the right-hand side depends only on the standard coordinates.
\begin{proof}
Let $B(t),G(t),D(t),R(t)$ be any solution of (\ref{eq_matrixODE}) in $\cV$.
Multiply the first three equations of (\ref{eq_matrixODE}) by $e_{ij}^T$ on the left and $e_{ij}$ on the right and the fourth equation of (\ref{eq_matrixODE}) by
$e_{in}^T$ on the left and $e_{jn}$ on the right

The first equation of (\ref{eq_coordinateODE}) follows immediately.  To get the second equation we need to show that we can replace $B$ by $\hB$ in $e_{ij}^T(BA + A^TB)e_{ij}$.  The key to the proof is the fact that
$L^TA =0$ which is easy to see from the definition (\ref{eq_A}) and expresses the translation invariance of Newton's laws of motion.   It follows  that the image of $A$
in contained in $\Dstar$.  Then since $B$ and $\hB$ agree as bilinear forms on $\Dstar$ and $e_{ij},Ae_{ij}\in\Dstar$ we have
$$\begin{aligned}
e_{ij}^T(BA + A^TB)e_{ij} &= e_{ij}^TB(Ae_{ij}) + (Ae_{ij})^TBe_{ij}\\
&= e_{ij}^T\hB(Ae_{ij}) + (Ae_{ij})^T\hB e_{ij} = e_{ij}^T(\hB A + A^T\hB)e_{ij}.
\end{aligned}$$

The same reasoning shows that we can replace $B,G,R$ by $\hB,\hG,\hR$ in the third and fourth equations.
\end{proof}

Lagrange used his reduced differential equations to find the relative equilibrium solutions of the three-body problem in $\R^3$.  Albouy and Chenciner generalized this idea to the $n$-body problem in $\R^d$.  Call $(X,V)$ a {\em relative equilibrium state} if the standard coordinates  form an equilibrium of the reduced differential equations (\ref{eq_coordinateODE}).   It turns out that this is equivalent to requiring that all of the mutual distances $r_{ij}(t)$ are constant, that is, we have a rigid motion (see \cite{AlbChen},\cite{Chen} or \cite{Moe} for a proof).  In particular, the antisymmetric matrix $\hR(t)$ is constant.  Setting $\dot \r_{ij}=0$, $1\le 1<j<n$ shows that the  the configuration $X$ satisfies
\begin{equation}\label{eq_balanced}
B A=A^T B\qquad \text{as bilnear forms on }\Dstar
\end{equation}
In other words, the bilinear form $BA$ is symmetric when restricted to $\Dstar$ .  A configuration satisfying (\ref{eq_balanced}) is called a {\em balanced configuration}.  Thus the configuration of a relative equilibrium state is balanced and the converse is also true -- every balanced configuration gives rise to a relative equilibrium motion in some $\R^d$.  The balanced configurations include the planar  central configurations but there are many balanced configurations which are not central.  For example,  when two masses are equal, any isosceles triangle is balanced.  The corresponding relative equilibrium motions take place in $\R^4$.

In the rest of this paper we want to investigate partially rigid motions, where only some of the mutual distance are assumed to be constant. In particular, for hinged solutions, all but one distance is assumed fixed.  For $n=3,4$ we will  show that  the remaining distance is also constant and so the motion is actually a relative equilibrium after all.  To study partially rigid motions we will need to derive explicit differential equations for the individual distances $r_{ij}(t)$.

Since $b_{ij}=r_{ij}^2$, equations  (\ref{eq_coordinateODE}) give
$\dot b_{ij} = 2r_{ij}\dot r_{ij} = 2g_{ij}$ or
\begin{equation}\label{eq_rijdot}
\dot r_{ij} = \frac{g_{ij}}{r_{ij}}\qquad 1\le i<j\le n.
\end{equation}
It follows that if one of the mutual distances $r_{ij}(t)$ is constant along a solution, then $g_{ij}(t)=0$  for all $t$.  Then the derivatives of $g_{ij}$ must  also vanish.

We have $\dot g_{ij} = d_{ij}+\fr12e_{ij}^T(\hB A+A^T \hB)e_{ij}$ and
\begin{equation}\label{eq_Gddot}
\ddot g_{ij}= 2e_{ij}^T(\hG A+A^T\hG)e_{ij} + e_{ij}^T(A^T\hR-\hR A)+\fr12e_{ij}^T(\hB\dot A+\dot A^T\hB)e_{ij}.
\end{equation}
Since $A(X)$ depends only on the mutual distances we have
$$\dot A = \sum_{i<j}A_{r_{ij}}\dot r_{ij} =  \sum_{i<j}A_{r_{ij}}g_{ij}/r_{ij}$$
where $A_{r_{ij}}$ are the partial derivative matrices of $A$.  We will study equation in detail  for the three- and four-body problems below.  For now we  just observe that for a given configuration $X$, equation (\ref{eq_Gddot}) is a linear equation for the $g_{ij}, \r_{ij}$.  In particular, it does not involve the $d_{ij}$.

\section{The Three-Body Problem is Unhinged}
For  $n=3$ the matrices $\hB,\hG,\hD,\hR$ are $3\times 3$ and their entries are related to the standard coordinates by
$$\hB=-\fr12\m{0&r_{12}^2&r_{13}^2\\r_{12}^2&0&r_{23}^2\\r_{13}^2&r_{23}^2&0}, \hG=-\fr12\m{0&g_{12}&g_{13}\\g_{12}&0&g_{23}\\g_{13}&g_{23}&0}, \hR=\m{0&\r_{12}&0\\-\r_{12}&0&0\\0&0&0}.$$

Suppose we have a hinged solution with $r_{12}(t)$ and $r_{13}(t)$ both constant.  Then $g_{12}(t)=g_{13}(t)=0$.  We have
$$
A = 
\m{- \fr{m_2}{r_{12}^3}- \fr{m_3}{r_{13}^3}& \fr{m_1}{r_{12}^3}& \fr{m_1}{r_{13}^3}\\
 \fr{m_2}{r_{12}^3}&-\fr{m_1}{r_{12}^3}-\fr{m_3}{r_{23}^3}& \fr{m_2}{r_{23}^3}\\
  \fr{m_3}{r_{13}^3}& \fr{m_3}{r_{23}^3}&- \fr{m_1}{r_{13}^3}- \fr{m_2}{r_{23}^3}
  }\qquad 
  \dot A = \frac{3g_{23}}{r_{23}^5}\m{0&0&0\\0&m_3&-m_2\\ 0&-m_3&m_2}
$$

Substituting all this into (\ref{eq_Gddot}) and bracketing by $e_{12}^T, e_{12}$ and  by $e_{13}^T, e_{13}$ gives
\begin{equation}\label{eq_le3bp}
\begin{aligned}
\ddot g_{12}&=\left(2m_3(r_{13}^{-3}-x^{-3})\right)\r_{12}+\fr{m_3}{2}\left(4r_{13}^{-3}+3(r_{12}^2-r_{13}^2)x^{-5}-x^{-3}\right)g_{23}\\
\ddot g_{13}&=\left(2m_2(x^{-3}-r_{12}^{-3})\right)\r_{12}+\fr{m_2}{2}\left(4r_{12}^{-3}+3(r_{13}^2-r_{12}^2)x^{-5}-x^{-3}\right)g_{23}.
\end{aligned}
\end{equation}
Here $r_{12}, r_{13}$ are fixed positive real numbers and $x=r_{23}>0$ is the remaining mutual distance.  We  want  to show that $x(t)$  is actually constant  too or, equivalently, that $g_23(t)=x(t)\dot x(t)=0$ for all $t$.  Suppose not.  Then $g_{23}(t_0)\ne 0$ for some $t_0$ and this inequality holds on some interval around $t_0$ $x(t)$ is not constant on this interval.  Then $(\r_{12}(t),g_{23}(t))$ is a nontrivial solution of the linear system (\ref{eq_le3bp}) and its determinant  must vanish. The determinant is

For any choice of the constants $r_{12}, r_{13}$ the equation $p=0$ gives a nontrivial polynomial constraint on $x$.   This forces $x(t)$ to be constant, a contradiction.
Thus $x(t)$ must be constant and we have a relative equilibrium.  This completes the proof for $n=3$.

\section{The Four-Body Problem is Unhinged}
For  $n=4$ the matrices $B,G,R,D$ are $4\times 4$ and we may assume they are of the form given by Lemma~\ref{lemma_reps}. 
Each of the matrices $B,G,D$ has 6 independent entries and, in particular, there are 6 mutual distances 
$$r_{12},r_{13},r_{14},r_{23},r_{24},r_{34}.$$
The antisymmetric matrix $R$ has  3 independent entries  $\r_{12},\r_{13},\r_{23}$.

If there is a hinged solution with all of the distances except  $r_{34}(t)$ constant then $g_{12},g_{13},g_{14},g_{23},g_{24}$ must all vanish together with all of their derivatives.  As above, we will just need to  use the vanishing of the second derivatives.  Due to the complexity of the equations,  it turns out be convenient to introduce the reciprocals of the distances  and some normalizations.  Let
$x = r_{34}^{-1}$ and $k_{ij}=r_{ij}^{-1}$.  Furthermore, we may use scaling symmetry to assume $r_{12}=k_{12}=1$ and $m_4=1$.

We have five linear equations $\ddot g_{ij} =  0$, $(i,j)\ne(3,4)$, in four variables $\r_{12},\r_{13},\r_{23},g_{34}$ with  coefficients  depending on the masses and the reciprocal distances  $k_{ij}$ and  $x$.  The $5\times 4$ matrix of this linear system is
$$A=
\begin{bmatrix}
a_{11} & {m_3} \left({k_{23}}^3-{k_{13}}^3\right) & {m_3} \left({k_{13}}^3-{k_{23}}^3\right) & 0 \\
 \left({k_{23}}^3-1\right) {m_2} & a_{22} & \left({k_{23}}^3-1\right) {m_2} &a_{24} \\
 \left({k_{24}}^3-1\right) {m_2} & {m_3} \left(x^3-{k_{13}}^3\right) & 0 & a_{34} \\
 \left(1-{k_{13}}^3\right) {m_1} & \left({k_{13}}^3-1\right) {m_1} & a_{33} & a_{44} \\
 \left(1-{k_{14}}^3\right) {m_1} & 0 & {m_3} \left(x^3-{k_{23}}^3\right) & a_{54} \\
\end{bmatrix}
$$
where
$$
\begin{aligned}
a_{11}&={m_3} \left({k_{13}}^3-{k_{23}}^3\right)+{k_{14}}^3-{k_{24}}^3\\
a_{22}&=m_1\left(1-{k_{23}}^3\right) +{k_{14}}^3-x^3 \\
a_{33}&=m_1\left(1-{k_{13}}^3\right)+{k_{24}}^3-x^3\\
a_{24}&=\smfr{3x^5}{4}  \left({k_{13}}^{-2}-{k_{14}}^{-2}\right)+{k_{14}}^3-\smfr{x^3}{4} \\
a_{34}&={m_3} \left(\smfr{3x^5}{4}  \left({k_{14}}^{-2}-{k_{13}}^{-2}\right)+{k_{13}}^3-\smfr{x^3}{4}\right)\\
a_{44}&=\smfr{3}{4} x^5\left({k_{23}}^{-2}-{k_{24}}^{-2}\right)+{k_{24}}^3-\smfr{x^3}{4}\\
a_{54}&={m_3} \left(\smfr{3x^5}{4} \left({k_{24}}^{-2}-{k_{23}}^{-2}\right)+{k_{23}}^3-\smfr{x^3}{4}\right)
\end{aligned}
$$

If there is a hinged  solution,  this  system  must   have nontrivial  solutions.  In fact, it must have solutions with $g_{34}\ne 0$ or even, since it's linear, with $g_{34}=1$.  
The existence of such solutions imposes constraints on the coefficients, that is, constraints on the $k_{ij}$ and $x$.  The goal is to show  that for all choices of the positive constants $k_{ij}, m_i$, we end  up with only finitely many choices for  $x$.

A straightforward appeal to $4\times 4$ determinants of $A$ does not work.  There are five such determinants obtained by omitting one row, each  a polynomial in $x$  with coefficients depending on the constants $k_{ij}, m_i$.  Unfortunately,  there exist  values of these constant making all of them identically zero.  This is true, for example, if all five $k_{ij}=1$.  In this case, however, the system of equations becomes extremely simple.  For example the second and third equation with $g_{34}=1$ two give
$$\rho_{13}(x^3-1) = 1-\fr{x^3}{4} \qquad \rho_{13}(x^3-1) = \fr{x^3}{4}-1 $$
This is solvable only if $x^3=4$ so we do indeed get a constraint on $x$ and constant $x$ contradicts $g_{34}\ne 0$.

We will have to consider various cases depending on the values of the constants.  Here are the steps of the argument.
\begin{enumerate}
\item If $x=k_{13}$ or $x=k_{23}$ we have our constraint on $x$.  Otherwise we can solve two of the five equations for  $\rho_{13}$ and $\rho_{23}$.
\item After setting $g_{34}=1$, this leaves three linear equations of the form $a_i\rho_{12}=b_i$, $i=1,2,3$.  Then three $2\times 2$ determinants $f_i = a_jb_k-a_kb_j$ must vanish.
\item Each equation $f_i=0$ imposes a polynomial constraint on $x$ unless all of the coefficients of all three polynomials vanish.  These coefficients form a system $P_i=0$, $i=1,\ldots, 18$, of polynomials in $k_{ij}, m_i$.  Using Groebner bases we show that these polynomials vanish only when all of the $k_{ij}=1$, the case already dealt with above.
\end{enumerate}

Now we will give some details.  For step 1, use the equations $\ddot g_{13}=\ddot g_{23}==0$  whose coefficients are in rows 2 and 4 of $A$.
If $x\ne k_{13}$ and $x\ne k_{23}$ then since these variables are real, the leading coefficients are nonzero and we can solve uniquely for $\r_{13},\r_{23}$.  

For step 2, we eliminate $\r_{13},\r_{23}$ from the remaining equations $\ddot g_{12}=\ddot g_{14}=\ddot g_{24}=0$ and set $g_{34}=1$ giving three linear equations of the form $a_i \rho_{12} =  b_i$ where $a_i, b_i$ are rational functions of $k_{ij}, m_i$.  Here $i=1$ represents the equation derived from $\ddot g_{12}$ and so on. These are quite complicated but their denominators have only the nonzero factors $k_{ij}, m_i, (k_{13}^3-x^3), (k_{23}^3-x^3)$.  Clearing denominators gives equations where $a_i, b_i$ are polynomials.  The determinants $f_i = a_jb_k-a_kb_j$ must vanish if a solution exists, where $(i,j,k)$ is a cyclic permutation of $(1,2,3)$.   Before proceeding to step 3, we factorize the polynomials $f_i$ and the remove nonzero factors of $k_{ij}, m_i, (k_{13}^3-x^3), (k_{23}^3-x^3)$.  Let the resulting polynomials still be called $f_i$.

Each $f_i$ is a polynomial of of degree 9 in x
$$f_i = p^i_0+p^i_3x^3+p^i_5x^5+p^i_6x^6+p^i_8x^8+p^i_9x^9$$
where the coefficients are polynomials in the constants $k_{ij},m_i$.  The terms of degrees 1,2,4,7 are missing so each has 6 nonzero coefficients.  If the constants are chosen such that any one of these coefficients is nonzero, then $x$ will be constrained to a finite set, as required.  Taking all of the coefficients of all three polynomials gives a system of 18 polynomial equations.

For step 3 we need to show that if all 18 polynomials vanish simultaneously, then all of the $k_{ij}=1$.  We have already seen that in that case we must have $5x^3=2$.  To accomplish this we first compute a Groebner basis of a subset of the polynomials.  With the notation above, the subset is $S=\{p^1_0,p^1_9,p^2_0,p^2_6,p^3_6,p^3_9\}$, which are among the simplest of the 18 coefficients.  We compute a Groebner basis $B$ for the ideal generated by $S$ using the degree reverse lexicographic order with the variables taken in the order
$k_{13},k_{14},k_{23},k_{24},m_1,m_2,m_3$.  If $x$ is not  constrained to a finite set, all of the polynomials in the basis $B$ must vanish.
Among these are the  polynomials
$$m_2(m_1+m_2)(k_{24}^3-1)(k_{14}^3-k_{24}^3)\qquad m_2(m_1+m_2)(k_{24}^3-1)(k_{13}^3-k_{23}^3).$$
So we either have $k_{24}=1$ or else $k_{14}=k_{24}$ and $k_{13}=k_{23}$.

Substituting $k_{24}=1$ in the basis $B$ produces the polynomial $(m_1+m_2)m_3(k_{23}^3-1)$ so we must also have $k_{23}=1$.  Making this substitution produces the polynomial $m_1(k_{14}^3-1)$ giving $k_{14}=1$ and this final substitution forces $k_{13}=1$ as well.  So this case reduces  to the situation with all $k_{ij}=1$.

On the other hand, substitution of $k_{23}=k_{13}$ and $k_{24}=k_{14}$ into $B$ produces $k_{24}^3-k_{14}^3$ so we must have $k_{24}=k_{14}$ too.  Both the these together lead to the equation 
$$(m_1+m_2)^2(1+m_1+m_2+m_3)k_{24}^3(k_{24}^3-1)^2(m_3-k_{24}^3+1)=0.$$
Since we already eliminated the case $k_{14}=1$ we must have $m_3=k_{24}^3-1$.  However, making this additional substitution in $B$ leads to the polynomial $(m_1+m_2)k_{23}^3(k_{24}^3-1)$.     This forces $k_{24}=1$ which we have already eliminated.

To summarize step 3, we have shown that either $x$ is constrained to a finite set by or else the positive constants $k_{ij}, m_i$ must satisfy 18 polynomial equations.  But the simplest 6 of these equations force all of $k_{ij}=1$, a case which led to $x^3=4$.  Thus in all cases, there are only finitely many possibilities for the variable distance $r_{34}=1/x$ which must then be constant like the others.  Hinged solutions don't exist.

\end{document}